\pgfplotsset{compat=newest}
\newcommand{\adj}{\bigcup_{\mathcal{F}}}
\newtheorem{theorem}{Theorem}[section]
\newtheorem{proposition}[theorem]{Proposition}
\newtheorem{definition}[theorem]{Definition}  
\newtheorem{lemma}[theorem]{Lemma}
\title{Non-Hausdorff Manifolds via Adjunction Spaces}
\author{David O'Connell}
\date{\small{\textit{Okinawa Institute of Science and Technology}} \\ \small{\textit{1919-1 Tancha, Onna-son, Okinawa 904-0495, Japan}}}
\begin{document}

\maketitle

\begin{abstract}
    In this paper we will introduce and develop a theory of adjunction spaces which allows the construction of non-Hausdorff topological manifolds from standard Hausdorff ones. This is done by gluing Hausdorff manifolds along homeomorphic open submanifolds whilst leaving the boundaries of these regions unidentified. In the case that these gluing regions have homeomorphic boundaries, it is shown that Hausdorff violation occurs precisely at these boundaries. We then use this adjunction formalism to provide a partial characterisation of the maximal Hausdorff submanifolds that a given non-Hausdorff manifold may admit.
\end{abstract}

\hspace{-5mm}

In this paper we will study non-Hausdorff manifolds. These are locally-Euclidean second-countable spaces that contain points that are ``doubled" or superimposed on top of each other. As the name would suggest, such points cannot be separated by open sets, and thus violate the Hausdorff property. The prototypical example of a non-Hausdorff manifold is the so-called \textit{line with two origins}, pictured below. \\

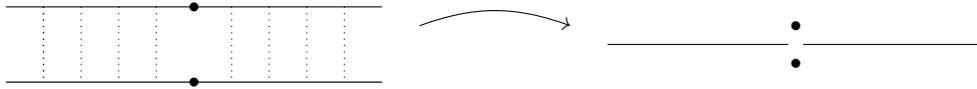
\begin{figure}
    \centering
    \begin{tikzpicture}
 
 \draw[] (-1,0)--(4,0);
 \draw[] (-1,-1)--(4,-1);
 \draw[dotted] (-0.5,0)--(-0.5,-1);
 \draw[dotted] (-0,0)--(-0,-1);
 \draw[dotted] (-0.5,0)--(-0.5,-1);
 \draw[dotted] (0.5,0)--(0.5,-1);
 \draw[dotted] (1,0)--(1,-1);
 \draw[dotted] (2,0)--(2,-1);
 \draw[dotted] (2.5,0)--(2.5,-1);
 \draw[dotted] (3,0)--(3,-1);
 \draw[dotted] (3.5,0)--(3.5,-1);

 \fill(1.5,0) circle[radius=0.06cm] {};
 \fill(1.5,-1) circle[radius=0.06cm] {};

 \draw[->] (4.5,-0.25) to[out=20, in=160] (6.5,-0.25);

 \draw[] (7,-0.5)--(9.4, -0.5);
 \draw[] (9.6, -0.5)--(12,-0.5);
 
 \fill(9.5,-0.25) circle[radius=0.06cm] {};
 \fill(9.5,-0.75) circle[radius=0.06cm] {};
 
\end{tikzpicture}

    \caption{The line with two origins (right), constructed by gluing together two copies of the real line.}
    \label{Fig: Line with two origins}
\end{figure}

The line with two origins can be constructed by gluing together two copies of the real line together everywhere except at the origin. Other examples such as those figures found in \cite{baillif2008manifolds} and \cite{haefliger1957varietes} suggest a general theory for constructing non-Hausdorff manifolds by gluing together Hausdorff ones along open subspaces. A recent result of Placek and Luc confirms that any non-Hausdorff manifold can be built according to such a procedure \cite{luc2020interpreting}. \\

In this paper we will introduce and refine an approach similar to that of \cite{luc2020interpreting} and \cite{OConnellthesis} to further study non-Hausdorff manifolds. We will start by introducing a calculus for adjoining countably-many Hausdorff manifolds together. We will show that any adjunction of countably-many Hausdorff manifolds $M_i$ along open subsets $A_{ij}$ with pairwise homeomorphic boundary components will yield a non-Hausdorff manifold $\textbf{M}$ in which the Hausdorff-violation occurs precisely at the $\textbf{M}$-relative boundaries of the subsets $A_{ij}$. Moreover, in such a situation the manifolds $M_i$ will sit inside $\textbf{M}$ as maximal Hausdorff (open) submanifolds. \\

Interestingly, a non-Hausdorff manifold will often admit infinitely-many maximal Hausdorff submanifolds \cite{muller2013generalized}. This observation motivates the following question: in a non-Hausdorff manifold built by gluing together Hausdorff manifolds $M_i$, are there any topological properties that distinguish the $M_i$ from the other maximal Hausdorff submanifolds? There is a well-known criterion due to Hajicek \cite{hajicek1971causality} that guarantees a given subset is a maximal Hausdorff submanifold. We will spend some time refining this idea, with the eventual conclusion being that if the non-Hausdorff manifold is ``simple" (in a precise sense to be defined later) then the spaces $M_i$ are the unique subspaces satisfying a stricter form of Hajicek's criterion. \\

This paper is organised as follows. In the first section we will introduce a generalised theory of adjunction spaces as colimits of appropriate diagrams. We will identify various conditions which allow certain topological features to be preserved in the adjunction process. With an eye towards the rest of the paper, special attention is paid to those adjunction spaces formed by gluing together topological spaces along open subspaces. \\

In Section 2 we will apply this formalism to the setting of manifolds. We will show that locally-Euclidean second-countable spaces can be formed by gluing together Hausdorff manifolds along homeomorphic open submanifolds. We will then spend some time studying the situation in which the gluing regions have homeomorphic boundary components. We will also argue that non-Hausdorff manifolds built in this way may be paracompact, however they will not admit partitions of unity subordinate to every open cover. We finish Section 2 with a discussion of some known results found regarding Hausdorff submanifolds. \\

In Section 3 we will introduce some examples of non-Hausdorff manifolds built from adjunction spaces. Most of these revolve around Euclidean space, with the exception of a non-Hausdorff sphere \cite{hommelberg2014compact, deeley2022fell}, which we will construct from four copies of punctured spheres. Of particular interest is the branched Euclidean plane, which we will see has infinitely-many maximal Hausdorff submanifolds. Motivated by our examples, in Section 4 we will generalise the result of \cite{muller2013generalized} and show that particularly simple non-Hausdorff manifolds admit only finitely-many maximal Hausdorff submanifolds satisfying a natural condition on their boundaries. \\

Throughout this paper we will assume that all manifolds, Hausdorff or otherwise, are locally-Euclidean, second-countable and connected. We will denote Hausdorff manifolds using standard Latin letters, and we will use boldface characters to emphasise that the manifold in question is potentially non-Hausdorff. All notions of topology used in this paper can be found in standard texts such as \cite{munkrestopology} or \cite{lee2010introduction}. 

\section{Adjunction Spaces}

We start by presenting our formalism for general adjunction spaces. The focus is mainly on the situation in which topological spaces are glued along open sets, since this will be an important precursor to our later discussions of non-Hausdorff manifolds. 

\subsection{Basic Properties}
There are at least two ways to glue together multiple topological spaces in a consistent way. These are: 
\begin{enumerate}
    \item to iterate a binary construction several times over, or 
    \item to glue a collection of spaces together simultaneously.  
\end{enumerate}
The first approach would amount to suitably modifying the standard adjunction spaces found in say \cite{lee2010introduction} or \cite{brown2006topology}. Throughout this paper we will instead focus on the latter case. Formally, gluing together multiple spaces can be achieved by fixing some index set $I$ to enumerate the spaces that we would like to glue together, and by defining a triple of sets $\mathcal{F} := (\textsf{X}, \textsf{A}, \textsf{f})$, where:
\begin{itemize}
    \item the set $\textsf{X}$ is a collection of topological spaces $X_i$, 
    \item the set $\textsf{A}$ is a collection of sets $A_{ij}$ such that $A_{ij} \subseteq X_i$  for all $j \in I$, and 
    \item the set $\textsf{f}$ is a collection of continuous maps $f_{ij}: A_{ij} \rightarrow X_j$.
\end{itemize} 
In order to yield a well-defined adjunction space, we need to impose some consistency conditions on the data contained within $\mathcal{F}$. These conditions are captured in the following definition. 
\begin{definition}\label{adjsystemdef}
A triple $\mathcal{F} = (\textsf{X}, \textsf{A}, \textsf{f})$, is called an adjunction system if it satisfies the following conditions for all $i,j \in I$.
\begin{enumerate}[itemsep=0.7mm]\setlength{\itemindent}{1em}
    \item[\textbf{A1)}] $A_{ii} = X_i$ and $f_{ii} = id_{X_i}$ 
    \item[\textbf{A2)}] $A_{ji} = f_{ij}(A_{ij})$, and $f_{ij}^{-1} = f_{ji}$
    \item[\textbf{A3)}]  $f_{ik}(a) = f_{jk} \circ f_{ij}(a)$ for each $a\in A_{ij}\cap A_{ik}$.
\end{enumerate}
\end{definition}
Observe that the second condition above ensures that each $f_{ij}$ is a homeomorphism. Given an adjunction system $\mathcal{F}$, we can then define the\textit{ adjunction space subordinate to} $\mathcal{F}$, denoted $\bigcup_{\mathcal{F}} X_i$, as the topological space obtained from quotienting the disjoint union $$ \bigsqcup_i X_i := \{ (x,i) \ | \ x \in X_i \} $$ under the relation $\cong$, where $(x,i) \cong (y,j)$ iff $f_{ij}(x) = y$. The conditions of Definition \ref{adjsystemdef} are precisely what is needed to ensure the relation $\cong$ is an equivalence relation. Points in the adjunction space $\bigcup_{\mathcal{F}} X_i$ can be described as equivalence classes of the form $$[x,i] := \{ (y,j) \ | \ f_{ij}(x) = y \}.$$ 
By construction we have a collection of canonical maps $\phi_i: X_i \rightarrow \bigcup_{\mathcal{F}} X_i$ which send each $x$ in $X_i$ to its equivalence class in $\adj X_i$. By construction these maps are continuous and injective. Moreover, these maps will commute on the relevant overlaps, i.e. the equality 
        $$  \phi_j \circ f_{ij} = \phi_i $$
holds for all $i,j$ in $I$. Since the topology of an adjunction space is the quotient of a disjoint union, by construction we have the following useful characterisation of open sets. 
\begin{proposition}\label{adj U open subset iff preimages open in X_i}
A subset $U$ of $\adj X_i$ is open in the adjunction topology iff $\phi_i^{-1}(U)$ is open in $X_i$ for all $i$ in $I$.
\end{proposition}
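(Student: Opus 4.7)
The plan is to simply unwind the definitions of the three topologies involved and chain them together, since the adjunction topology on $\adj X_i$ is defined as the quotient topology of the disjoint union topology, and the canonical maps $\phi_i$ factor through this quotient. The two directions of the iff will fall out simultaneously as a chain of equivalences.

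More concretely, I would proceed as follows. Let $q: \bigsqcup_i X_i \to \adj X_i$ denote the quotient map, so by construction $U \subseteq \adj X_i$ is open in the adjunction topology if and only if $q^{-1}(U)$ is open in $\bigsqcup_i X_i$. Next, by the definition of the disjoint union topology, a subset $V$ of $\bigsqcup_i X_i$ is open if and only if $\varphi_i^{-1}(V)$ is open in $X_i$ for every $i \in I$. Applying this to $V = q^{-1}(U)$ gives that $q^{-1}(U)$ is open if and only if $\varphi_i^{-1}(q^{-1}(U))$ is open in $X_i$ for all $i$.

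The final step is just to observe that $\varphi_i^{-1}(q^{-1}(U)) = (q \circ \varphi_i)^{-1}(U) = \phi_i^{-1}(U)$, which is immediate from the definition $\phi_i := q \circ \varphi_i$. Concatenating these equivalences yields the claim.

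There is no real obstacle here; the proof is a pure definition-chase, and the only thing worth being explicit about is which topology is being invoked at each step (quotient, disjoint union, composition of preimages). This mirrors the proof of the binary analogue, Proposition \ref{BINadjspace open iff preimages open}, with the two canonical maps $\phi_X, \phi_Y$ replaced by the family $\{\phi_i\}_{i \in I}$.
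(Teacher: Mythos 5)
Your proof is correct and matches the paper's approach: the paper states this result ``follows immediately by construction,'' and your chain of equivalences (quotient topology, disjoint union topology, and $\phi_i = q \circ \varphi_i$) is precisely the definition-unwinding that claim implicitly relies on. Nothing is missing.
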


In the binary version of adjunction spaces, it is well-known that the adjunction of two spaces is the pushout of the diagram below \cite{brown2006topology}. 
\begin{center}
\begin{tikzcd}
Y & A \arrow[l, "f"'] \arrow[r, hook, "\iota_A"] & X
\end{tikzcd}
\end{center}
The following result shows that the adjunction space subordinate to $\mathcal{F}$ can be seen as the colimit of the diagram formed from $\mathcal{F}$.
\begin{lemma}\label{adjuniversalprop}
Let $\psi_i: X_i \rightarrow Y$ be a collection of continuous maps from each $X_i$ to some topological space $Y$, such that for every $i,j \in I$ it is the case that $\psi_i = \psi_j\circ f_{ij}$. Then there is a unique continuous map $g: \adj X_i \rightarrow Y$ and $\psi_i = g \circ \phi_i$ for all $i$ in $I$.
\end{lemma}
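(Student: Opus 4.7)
The plan is to mirror the proof of Lemma \ref{BINadjspace universalprop} almost verbatim, exploiting the fact that the adjunction topology has been characterised in Proposition \ref{adj U open subset iff preimages open in X_i} by exactly the same preimage condition used in the binary case. First I would define the candidate map by $g([x,i]) := \psi_i(x)$. The definition is forced: commutativity of $\psi_i = g \circ \phi_i$ requires $g(\phi_i(x)) = \psi_i(x)$, and every element of $\bigcup_{\mathcal{F}} X_i$ is of the form $\phi_i(x)$ for some $i \in I$ and some $x \in X_i$, so this prescription determines $g$ uniquely on underlying sets and simultaneously settles uniqueness: any other $g'$ agreeing with all $\psi_i$ through the $\phi_i$ must assign the same value on every class.

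Next I would verify well-definedness of $g$. If $[x,i] = [y,j]$ then by definition of $\cong$ we have $f_{ij}(x) = y$, and the compatibility hypothesis $\psi_i(x) = \psi_j(f_{ij}(x))$ gives $\psi_i(x) = \psi_j(y)$, so $g$ respects the equivalence classes. Strictly speaking, since $\cong$ was defined as the symmetric/transitive closure via the conditions \textbf{A1}--\textbf{A3}, one should check that the compatibility condition is preserved along chains of identifications, but this follows immediately from the axioms: reflexivity is trivial, symmetry uses $f_{ji} = f_{ij}^{-1}$ together with the hypothesis applied in the reverse direction, and transitivity uses \textbf{A3} exactly as in the proof of Proposition 1.7.

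For continuity, let $U \subseteq Y$ be open. By Proposition \ref{adj U open subset iff preimages open in X_i}, to show $g^{-1}(U)$ is open in $\bigcup_{\mathcal{F}} X_i$ it suffices to show that $\phi_i^{-1}(g^{-1}(U))$ is open in $X_i$ for every $i \in I$. But
\[
\phi_i^{-1}\bigl(g^{-1}(U)\bigr) \;=\; (g \circ \phi_i)^{-1}(U) \;=\; \psi_i^{-1}(U),
\]
which is open in $X_i$ by continuity of $\psi_i$. Hence $g$ is continuous.

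I do not anticipate a serious obstacle here: Proposition \ref{adj U open subset iff preimages open in X_i} does all the topological heavy lifting, and the only genuinely new ingredient compared to the binary case is that the compatibility condition $\psi_i = \psi_j \circ f_{ij}$ must be stated for \emph{every} pair $(i,j)$, which is exactly what the hypothesis provides. The main conceptual point to double-check is that condition \textbf{A1} (so that $\phi_i$ actually lands in the adjunction space as a map from all of $X_i$) together with \textbf{A2} and \textbf{A3} really do make $\cong$ an equivalence relation compatible with the family $\{\psi_i\}$; this was already established in Proposition 1.7 and the paragraph preceding it, so the lemma reduces to bookkeeping once the definition of $g$ is written down.
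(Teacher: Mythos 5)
Your proposal is correct and follows essentially the same route as the paper's proof: define $g([x,i]) = \psi_i(x)$, verify well-definedness from the compatibility hypothesis, and obtain continuity from Proposition \ref{adj U open subset iff preimages open in X_i} via $\phi_i^{-1}(g^{-1}(U)) = \psi_i^{-1}(U)$, with uniqueness forced by surjectivity of the $\phi_i$ onto the classes. (One small remark: in this generalised setting $\cong$ is defined directly by $f_{ij}(x)=y$ and shown to be an equivalence relation outright in Proposition 1.7, so no closure argument is needed for well-definedness — but you correctly deferred to that proposition anyway.)
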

\begin{proof}
We define the map $g$ by $g([x,i]) = \psi_i(x)$, that is, $g = \psi_i \circ \phi_i^{-1}$. To see that this defines a function, we need to confirm that $g$ preserves equivalence classes. Suppose that $[x,i] = [y,j]$, i.e. $x = f_{ij}(y)$. Then: $$g([x,i]) = \psi_i(x) = \psi_j(f_{ij}(x)) = \psi_j(y) = g([y,j])$$
as required. We now show that $g$ is continuous. Let $U$ be open in $Y$, and consider the set $g^{-1}(U) = \phi_i \circ \psi_i^{-1}(U)$. Recall the set $g^{-1}(U)$ is open in $\adj X_i$ iff for each $i \in I$, the set $\phi_i^{-1}(g^{-1}(U))$ is open in $X_i$. Observe that: 
$$ \phi_i^{-1}(g^{-1}(U)) = \phi_i^{-1} \circ \phi_i \circ \psi_i^{-1}(U) = \psi_i^{-1}(U)     $$ which is open since $\psi_i$ is continuous. It follows that $g^{-1}(U)$ is open in $\adj X_i$, and thus $g$ is continuous. To see that $g$ is unique, we can use a similar argument to that in \cite{brown2006topology}.
\end{proof}

Throughout the remainder of this paper we will consider adjunction spaces formed by gluing open sets together. In this case, we make the following useful observation. 

\begin{lemma}\label{adjfAopenthen-phi-topoembeds}
Let $\adj X_i$ be an adjunction space formed from $\mathcal{F}$. If each $A_{ij}$ is an open subset of $X_i$, then each $\phi_i$ is an open embedding.  
\end{lemma}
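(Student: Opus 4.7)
The plan is to use Proposition \ref{adj U open subset iff preimages open in X_i} as the main tool. The preceding text already notes that each $\phi_i$ is continuous (since it is a composition of a canonical inclusion into the disjoint union with a quotient map) and injective (as a consequence of condition \textbf{A2}), so the only substantive content left is to show that $\phi_i$ is an open map; once this is established, $\phi_i$ restricted to its image is a continuous bijection with continuous inverse, hence a topological embedding, and openness in the codomain is immediate.

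The key computation will be the following: given an open $U \subseteq X_i$, I would like to show that $\phi_i(U)$ is open in $\adj X_k$. By Proposition \ref{adj U open subset iff preimages open in X_i} it suffices to show that $\phi_j^{-1}(\phi_i(U))$ is open in $X_j$ for every $j \in I$. Unwinding definitions, a point $y \in X_j$ lies in $\phi_j^{-1}(\phi_i(U))$ iff $[y,j] = [x,i]$ for some $x \in U$, which by the definition of $\cong$ is equivalent to the existence of an $x \in U \cap A_{ij}$ with $f_{ij}(x) = y$. In other words,
\[
\phi_j^{-1}(\phi_i(U)) \;=\; f_{ij}(U \cap A_{ij}).
\]

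To finish, I would invoke the three consequences of the hypothesis and of condition \textbf{A2}: first, $A_{ij}$ is open in $X_i$, so $U \cap A_{ij}$ is open in $X_i$ and therefore open in $A_{ij}$ with its subspace topology; second, $f_{ij} : A_{ij} \to A_{ji}$ is a homeomorphism by \textbf{A2}, so $f_{ij}(U \cap A_{ij})$ is open in $A_{ji}$; third, $A_{ji}$ is itself open in $X_j$ by the hypothesis (applied with the roles of $i$ and $j$ swapped), so openness in $A_{ji}$ promotes to openness in $X_j$. The case $j = i$ is the trivial one, since condition \textbf{A1} gives $\phi_i^{-1}(\phi_i(U)) = f_{ii}(U \cap X_i) = U$.

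I do not expect any substantial obstacle; the only point where one has to be slightly careful is in verifying that the preimage equals $f_{ij}(U \cap A_{ij})$ and not, say, $f_{ij}(U) \cup \{y : f_{ji}(y) \in U\}$ or some more elaborate set. This is handled cleanly by \textbf{A2}, which makes $f_{ij}$ and $f_{ji}$ mutually inverse bijections between $A_{ij}$ and $A_{ji}$, so the forward image and the preimage description agree. Once the openness of $\phi_i$ is in hand, the conclusion that $\phi_i$ is an open topological embedding follows formally.
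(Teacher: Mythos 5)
Your argument is correct and follows essentially the same route as the paper: both reduce to showing $\phi_i$ is an open map via Proposition \ref{adj U open subset iff preimages open in X_i}, establish the identity $\phi_j^{-1}(\phi_i(U)) = f_{ij}(U \cap A_{ij})$, and then use that $f_{ij}: A_{ij} \to A_{ji}$ is a homeomorphism (by \textbf{A2}) together with the openness of $A_{ji}$ in $X_j$ to promote openness to the ambient space. The only cosmetic difference is that you treat the case $j = i$ explicitly via \textbf{A1}, which the paper leaves implicit.
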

\begin{proof}
Fix some $\phi_i$. By construction $\phi_i$ is injective and continuous, so it suffices to show that $\phi_i$ is an open map. So, let $U$ be an open subset of $X_i$, and consider $\phi_i(U)$. By Prop. \ref{adj U open subset iff preimages open in X_i} this set is open in $\adj X_i$ iff for every $j \in I$, the preimage $\phi_j^{-1} \circ \phi_i(U)$ is open in $X_j$. Observe that $\phi_j^{-1} \circ \phi_i(U) = f_{ij}(U \cap A_{ij})$. Since $U$ is open in $X_i$, the set $U \cap A_{ij}$ is open in $A_{ij}$ (equipped with the subspace topology). By construction $f_{ij}:A_{ij} \rightarrow X_j$ is a embedding. Moreover, each $f_{ij}$ is an open map since we have assumed that each $A_{ij}$ is open. Thus $f_{ij}(U \cap A_{ij})$ is also open in $X_j$, from which it follows that $\phi_j^{-1} \circ\phi_i (U)$ is open in $X_j$ for all $j$. Since $U$ was arbitrary, we may conclude that $\phi_i$ is an open map. The result then follows from the fact that every continuous, open injective map is an open embedding. 
  \end{proof}

\subsection{Subsystems and Subspaces}
Suppose that we have an adjunction space $\adj X_i$ built from the adjunction system $\mathcal{F}$. It follows immediately from Definition \ref{adjsystemdef} that any subset $J$ of the underlying indexing set $I$ will yield another adjunction system $\mathcal{G}$, which can be obtained by simply forgetting the parts of $\mathcal{F}$ that are not contained in $J \subset I$.  As such, there is an associated adjunction space $\bigcup_{\mathcal{G}} X_j$, that consists of gluing only the component spaces $X_j$ where $j$ lies in the subset $J$. In this situation we will refer to $\mathcal{G}$ as an \textit{adjunctive subsystem} of $\mathcal{F}$, and we will write $\mathcal{G} \subseteq \mathcal{F}$. \\

In order to distinguish objects in $\bigcup_{\mathcal{G}} X_j$ from objects in the full adjunction space, we will use the double-bracketed notation $\llbracket \cdot \rrbracket$ to denote objects defined in the adjunctive subspace. In particular, we will use 
$$  \llbracket x,j \rrbracket := \{ (y,k) \ | \ k\in J \ \wedge \ f_{jk}(x) = y \}        $$
to denote the points in $\bigcup_{\mathcal{G}} X_j$. Furthermore, we will denote the canonical maps of the adjunctive subspace by $\chi_j$'s.\\

The universal property \ref{adjuniversalprop} yields a continuous map $g:\bigcup_{\mathcal{G}} X_j \rightarrow \adj X_i$ which will send each equivalence class $\llbracket x,j \rrbracket$ of $\bigcup_{\mathcal{G}} X_j$ into the (possibly larger) equivalence class $[x,j]$ of $\adj X_i$. In general there is no guarantee that the map $g$ acts as a homeomorphism between $\chi_j(X_j)$ and $\phi_j(X_j)$. However, this happens to be the case if we require the gluing regions $A_{ij}$ to be open. 

\begin{theorem}\label{thm adjunctive subspaces}
Let $\mathcal{F}$ be an adjunction system in which each all of the gluing regions $A_{ij}$ are open in their respective spaces. For any adjunctive subsystem $\mathcal{G} \subseteq \mathcal{F}$, the continuous function $g: \bigcup_{\mathcal{G}} X_j \rightarrow \adj X_i$ defined by $\llbracket x,i \rrbracket \mapsto [x,i]$ is an open embedding. 
\end{theorem}
\begin{proof}
By \ref{adjuniversalprop} $g$ is continuous, so it suffices to show that $g$ is both injective and open. For injectivity, let $\llbracket x,j \rrbracket$ and $\llbracket y,k \rrbracket$ be two distinct points in $\bigcup_{\mathcal{G}} X_j$. By construction the equivalence class $\llbracket x,j \rrbracket$ contains all elements of the form $(f_{jl}(x),l)$. Since $\llbracket y,k \rrbracket$ is distinct from $\llbracket x, j\rrbracket$, we may conclude that $f_{jk}(x)$ does not equal $y$, and thus $[x,j]\neq [y,k]$ as well. This confirms that $g$ is injective. 

Suppose now that $U$ is some open subset of the adjunctive subspace $\bigcup_{\mathcal{G}} X_j$. By Prop \ref{adj U open subset iff preimages open in X_i}, $U$ is open in the adjunctive subspace iff all of the preimages $\chi_j^{-1}(U)$ are open in their respective spaces $X_j$. Since we have assumed all $A_{ij}$ are open, Prop. \ref{adjfAopenthen-phi-topoembeds} ensures that all of the canonical maps $\phi_i$ of the adjunction space $\adj X_i$ are open embeddings. Thus the images $\phi_j \circ \chi_j^{-1}(U)$ are open in the spaces $\phi_j(X_j)$. Since these are open subspaces of $\adj X_i$, we may conclude that each $\phi_j \circ \chi_j^{-1}(U)$ is an open subset of $\adj X_i$. It follows that the union $$ \bigcup_{j \in J} \phi_j \circ \chi_j^{-1}(U)  $$
is open in $\adj X_i$. However, this set is precisely equal to the image $g(U)$. Since we chose $U$ arbitrarily, it follows that $g$ is open and therefore $g$ is an open embedding. 
\end{proof}

\subsection{Preservation of Various Properties}
We have seen that the adjunction of arbitrarily-many topological spaces is again a topological space. However, there is no guarantee that the gluing process will preserve any pre-existing structure. The following result is a collection of conditions that suffice to preserve topological features. We state these here without proof, since the arguments involved routinely follow from Lemma \ref{adjfAopenthen-phi-topoembeds} and basic facts of topology. \\

\begin{theorem}\label{preservation of top prop}
Let $\mathcal{F} = (\textsf{X}, \textsf{A}, \textsf{f})$ be an adjunction system with indexing set $I$, and denote by $\textbf{X}$ the adjunction space subordinate to $\mathcal{F}$.
\begin{enumerate}
    \item Suppose that for each $i$ in $I$, the collection $\mathcal{B}_i$ forms a basis for $X_i$. If each $\phi_i$ is an open map, then the collection $\mathcal{B} = \{ \phi_i(B) \ | \ B \in \mathcal{B}_i  \}$ forms a basis for $\textbf{X}$.
    \item Let $\textbf{X}$ be an adjunction space in which each $\phi_i$ is an open map. If each $X_i$ is first-countable, then so is the adjunction space $\textbf{X}$. 
    \item Let $\textbf{X}$ be an adjunction space in which the indexing set $I$ is countable. If every $X_i$ is Lindel\"of, then so is $\textbf{X}$.
    \item Let $\textbf{X}$ be an adjunction space in which the indexing set $I$ is countable. If each $X_i$ is separable, then so is $\textbf{X}$.
    \item Suppose each $X_i$ is second-countable. If $I$ is countable and each $\phi_i$ is an open map, then $\textbf{X}$ is also second-countable.
    \item If each $X_i$ is connected and each $A_{ij}$ is non-empty, then $\textbf{X}$ is connected.
    \item Let $\textbf{X}$ be an adjunction space in which every $A_{ij}$ is non-empty. If each $X_i$ is path-connected, then so is $\textbf{X}$.
    \item If $I$ is finite and each $X_i$ is compact, then so is $\textbf{X}$.
    \item Let $\textbf{X}$ be an adjunction space formed from a collection of $T_1$ spaces. If the canonical maps $\phi_i$ are all open, then $\textbf{X}$ is $T_1$.
    \item Let $\textbf{X}$ be an adjunction space in which each $X_i$ is locally-Euclidean. If each $\phi_i$ is an open embedding, then $\textbf{X}$ is also locally-Euclidean.
\end{enumerate}
\end{theorem}

\section{Non-Hausdorff Manifolds}
We will now use the adjunction formalism detailed in the previous section to construct non-Hausdorff manifolds. Observe that according to Lemma \ref{adjfAopenthen-phi-topoembeds} and items 5 and 10 of Theorem \ref{preservation of top prop} we already have the following.
    \begin{theorem}\label{Adj of manifolds gives generalised manifold}
    Let $\mathcal{F}$ be an adjunctive system consisting of countably-many (Hausdorff) manifolds $M_i$, in which each $A_{ij}$ is an open submanifold. Then the adjunction space subordinate to $\mathcal{F}$ is a locally-Euclidean second-countable space. 
    \end{theorem}
Let us denote by $\textbf{M}$ the adjunction space built according to the above. The open charts of $\textbf{M}$ at the point $[x,i]$ are given by $ (\phi_i(U), \varphi\circ \phi_i^{-1})$ where $(U, \varphi)$ is any open chart of $M_i$. Thus the manifold $\textbf{M}$ mirrors the local behaviour of the Hausdorff manifolds $M_i$. According to \ref{adjfAopenthen-phi-topoembeds} we may interpret $\textbf{M}$ as a locally-Euclidean, second-countable space that is covered by Hausdorff submanifolds. 

\subsection{Homeomorphic Boundaries} 
We will now identify conditions under which Hausdorff violation is guaranteed in the adjunction process. In order to do so, we will first recall a binary relation that formally encodes Hausdorff violation. We will opt for the notation used in \cite{hajicek1971causality, muller2013generalized}, though it should be noted that the same relation can also be found in \cite{kent2009note} and \cite[p. 67]{mardani2014topics} in essentially equivalent forms. \\

Let $\textbf{M}$ be a locally-Euclidean second-countable space, and consider the binary relation $\textsf{Y}$, defined as $ x \textsf{Y} y $ if and only if every pair of open neighbourhoods of $x$ and $y$ necessarily intersect. The relation $\textsf{Y}$ is reflexive and symmetric by construction, but it need not be transitive.\footnote{Interestingly, the relation $\textsf{Y}$ is weaker than the Hausdorff relation used in \cite{heller2011geometry}. It is likely that their relation is the transitive closure of $\textsf{Y}$, though a proof of this is beyond the scope of our paper.} It is well-known that a topological space is Hausdorff whenever convergent sequences have unique limits. Although the converse is not always true, in a first countable space one can always construct a sequence that converges to both elements of a Hausdorff-violating pair. As such, in our context the relation $x\textsf{Y}y$ asserts the existence of a sequence that converges to both $x$ and $y$ in $\textbf{M}$. Given a subset $V$ of $\textbf{M}$, we define $$ \textsf{Y}^V := \{ x\in \textbf{M} \ | \ \exists y( y \in V \wedge x \textsf{Y} y)\},     $$
thus $\textsf{Y}^V$ consists of all points in $\textbf{M}$ that are Hausdorff-inseparable from $V$. \\

In what follows, we will use the $\textsf{Y}$-relation to describe the Hausdorff violating points of the canonical subspaces $\phi_i(M_i)$. In order to do so, we will make use of the following definition.
\begin{definition}\label{DEF: homeomorphic boundaries}
Let $\mathcal{F}$ be an adjunction system as in Theorem \ref{Adj of manifolds gives generalised manifold}, and let $\textbf{M}$ be the corresponding adjunction space. We say that the gluing regions $A_{ij}$ have homeomorphic boundaries whenever each $A_{ij}$ is a proper subset of $M_i$ and each gluing map $f_{ij}$ can be extended to a homeomorphism $\tilde{f}_{ij}: Cl(A_{ij}) \rightarrow Cl(A_{ji})$. 
\end{definition}
This additional requirement will allow us to describe the Hausdorff-violating points using the $\textbf{M}$-relative boundaries of the canonical subspaces $M_i$. Before getting to the detailed description, we will first introduce some useful notation. \\

According to Theorem \ref{adjfAopenthen-phi-topoembeds}, we may view the subspaces $M_i$ as embedded inside the larger space $\textbf{M}$. To simplify notation, we will identify each $M_i$ with its image $\phi_i(M_i)$. Under this convention we may view $M_i$ as an honest Hausdorff (open) submanifold of $\textbf{M}$. In what follows, it will be useful to describe the boundaries of the $M_i$ in $\textbf{M}$. We will use the notation $\boldsymbol{\partial}$ to denote the $\textbf{M}$-relative boundaries of the $M_i$, that is, we define $$ \boldsymbol{\partial} M_i := \partial^{\textbf{M}}( \phi_i(M_i)).      $$ This boldface notation will similarly be used for the $\textbf{M}$-relative closures and interiors of sets. We will also identify each $A_{ij}$ in $M_i$ with its images $\phi_i(A_{ij})$ and $\phi_j(A_{ji})$ in $\textbf{M}$. In general, each $A_{ij}$ may have several $\textbf{M}$-relative boundary components. We denote these as follows: 
    $$ \boldsymbol{\partial}^i A_{ij} := \left(\partial^{\textbf{M}} \phi_i(A_{ij})\right) \cap \phi_i(M_i).          $$
We will now show that requiring an adjunction system to have homeomorphic gluing regions establishes a clear relationship between the boundary operator $\boldsymbol{\partial}$ and the binary relation $\textsf{Y}$.
\begin{lemma}\label{basic facts about homeomorphic bdrys}
Suppose that $\textbf{M}$ is a non-Hausdorff manifold built from an adjunction space in which the gluing regions have homeomorphic boundaries. Let $g_{ij}: \boldsymbol{\partial}^i A_{ij} \rightarrow\boldsymbol{\partial}^j A_{ij}$ be the map given by $g_{ij} := \phi_j \circ \tilde{f}_{ij} \circ \phi_i^{-1}$. 
    \begin{enumerate}
        \item The map $g_{ij}$ is a homeomorphism.
        \item For distinct points $[x,i]$ and $[y,j]$ in $\textbf{M}$, we have that $g_{ij}([x,i]) = [y,j]$ if and only if $[x,i] \textsf{Y} [y,j]$.
        \item $M_j \cap \boldsymbol{\partial}M_i = M_j \cap \textsf{Y}^{M_i}$. 
    \end{enumerate}
\end{lemma}
    \begin{proof}
    The first item is immediate from \ref{adjfAopenthen-phi-topoembeds} and \ref{DEF: homeomorphic boundaries}. Let $[x,i]$ and $[y,j]$ be distinct points in $\textbf{M}$. Suppose first that $g_{ij}([x,i]) = [y,j]$. By construction this means that $y = \tilde{f}_{ij}(x)$. In $\textbf{M}$, the gluing region $A_{ij}=A_{ji}$ equals the intersection $M_i \cap M_j$. Since the points $[x,i]$ and $[y,j]$ are distinct in $\textbf{M}$, it must be the case that $x$ and $y$ are in the boundaries of $A_{ij}$ and $A_{ji}$ respectively. Let $a_n$ be some sequence in $A_{ji}$ that converges to $y$ in $M_j$. Since $\tilde{f}_{ij}$ is a homeomorphism, it follows that $\tilde{f}^{-1}_{ij}(a_n)$ is a sequence in $A_{ij}$ that converges to $x$ in $M_i$. In the adjunction space $\textbf{M}$, the sequences $[a_n,j]$ and $[\tilde{f}^{-1}_{ij}(a_n), i]$ will be equal, and will converge to two distinct limits $[x,i]$ and $[y,j]$. Thus $[x,i] \textsf{Y} [y,j]$. The converse follows from part (1) and the fact that each $M_i$ is Hausdorff, thus has unique limits. 
    
    For the third item, suppose that $[y,j]$ is some element of $\boldsymbol{\partial}M_i$. Then there is a sequence in $M_i$ which converges to $[y,j]$ in $\textbf{M}$. Since $M_j$ is an open neighbourhood of $[y,j]$, without loss of generality we may assume that this sequence lies in $A_{ij}$. Thus $[y,j]$ lies in $\boldsymbol{\partial}^j A_{ij}$. It follows that $[y,j]\textsf{Y} g_{ij}([y,j])$, and thus $[y,j] \in \textsf{Y}^{M_i}$. The converse inclusion follows almost immediately from (1) and (2).  
    \end{proof}
The above result allows us to conclude that the Hausdorff-violating pairs of a non-Hausdorff manifold can be identified as the pairwise boundaries of open submanifolds that are glued together, provided that these boundaries exist and are homeomorphic to each other. The following result extends this idea.
\begin{lemma}\label{Mi are H-subman}
    Let $\textbf{M}$ be a non-Hausdorff manifold built from an adjunctive system $\mathcal{F}$. If the regions $A_{ij}$ have homeomorphic boundaries, then for each $M_i$ we have that $$ \textsf{Y}^{M_i} = \boldsymbol{\partial} M_i = \bigcup_{j\neq i} \boldsymbol{\partial}^j A_{ij}. $$
\end{lemma}
\begin{proof}
The inclusion $\textsf{Y}^{M_i} \subseteq \boldsymbol{\partial} M_i$ follows from Prop. \ref{basic facts about homeomorphic bdrys}.3. Suppose that $[y,j]$ is some element of $\boldsymbol{\partial} M_i$. Then there exists a sequence in $M_i$ that converges to $[y,j]$. Since $M_i$ and $M_j$ are open, without loss of generality we may assume that the sequence sits in the intersection $A_{ij}$. It follows that $[y,j]$ lies in $\boldsymbol{\partial}^j A_{ij}$. This shows that $\boldsymbol{\partial} M_i \subseteq \bigcup_{j\neq i} \boldsymbol{\partial}^j A_{ij}$. Suppose now that $[y,j]$ is in $\boldsymbol{\partial}^j A_{ij}$. We can use Prop \ref{basic facts about homeomorphic bdrys} to conclude that $[y,j] \textsf{Y} g_{ji}( [y,j])$. It follows that $\bigcup_{j \neq i} \boldsymbol{\partial}^j A_{ij} \subseteq \textsf{Y}^{M_i}$.
\end{proof}

\subsection{Paracompactness and Partitions of Unity}
It is well-known that Hausdorff manifolds are necessarily paracompact and admit partitions of unity subordinate to any open cover \cite{lee2013smooth}. In this section we will explore such results in the non-Hausdorff setting. \\

According to our discussion thus far, we may build non-Hausdorff manifolds from adjunction spaces. Since all Hausdorff manifolds are paracompact, we may expect there to be an analogue to the results of Theorem \ref{preservation of top prop} for paracompactness. Indeed this is true: if we consider finitely-built adjunction spaces and further restrict our attention to those adjunction spaces in which the gluing regions have homeomorphic boundaries, then we obtain the following result. 
\begin{theorem}\label{paracompactness preserved}
Let $\textbf{M}$ be a non-Hausdorff manifold built from an adjunction space in which the gluing regions have homeomorphic boundaries. If the indexing set $I$ of the adjunction system $\mathcal{F}$ is finite, then $\textbf{M}$ is paracompact.
\end{theorem}
\begin{proof}(Sketch)
Let $\mathcal{U}$ be some open cover of $\textbf{M}$. Consider the open covers $\mathcal{U}_i:= \{ \phi_i^{-1}(U) \ | \ U \in \mathcal{U} \}$ of the $M_i$. Since each $M_i$ is paracompact, each open cover $\mathcal{U}_i$ has a locally-finite refinement $\mathcal{V}_i$. We then define $$\mathcal{V} = \bigcup_{i \in I} \{\phi_i(V) \ | \ V \in \mathcal{V}_i \}.$$ Clearly $\mathcal{V}$ is a refinement of $\mathcal{U}$. To show that $\mathcal{V}$ is locally-finite around some point $[x,i]$, we will construct an open neighbourhood $W$ that intersects finitely-many members of $\mathcal{V}$. For any $j$ in $I$, there are three scenarios. 
\begin{enumerate}
    \item $[x,i]$ lies in $M_i \cap M_j$. In this case we may use the local finiteness of $\mathcal{V}_j$ in $M_j$ around $f_{ij}(x)$ to obtain some open neighbourhood $X_j$ that intersects finitely-many elements of $\mathcal{V}_j$. The set $\phi_j(X_j)$ will then intersect finitely-many elements of $\mathcal{V}$ coming from $\mathcal{V}_j$.  
    \item $[x,i]$ lies in $M_i \backslash \textbf{Cl}(M_j)$, that is, $[x,i]$ is in the set $\textbf{Int}(M_i \backslash M_j)$. This is an open set that is disjoint from $M_j$. 
    \item $[x,i]$ lies in $M_i \cap \boldsymbol{\partial} M_j$. In this case, we may use the local finiteness of $M_j$ around the boundary element $\tilde{f}_{ij}(x)$. This maps homeomorphically into some open neighbourhood of $x$ in $Cl^{M_i}(A_{ij})$. We may extend this to an open neighbourhood $X_j$ of $x$ in $M_i$. The set $\phi_i(X_j)$ then intersects finitely-many of the open sets in $\mathcal{V}$ coming from $\mathcal{V}_j$. 
\end{enumerate}
Intersecting all of the open sets mentioned above will yield an neighbourhood $W$ of $[x,i]$ that intersects at most finitely-many elements of $\mathcal{V}$. Note that $W$ is open since we have assumed that $I$ is finite. 
\end{proof}

The above result confirms that certain non-Hausdorff manifolds may be paracompact. In contrast to this, in the non-Hausdorff case there may be open covers that do not admit subordinate partitions of unity. This is immediate from standard results such as those in \cite{lee2013smooth}, however for the sake of completeness we will include a different argument. 
\begin{theorem}\label{no partitions of unity}
Let $\mathcal{U} = \{ U_\alpha\}_{\alpha \in A}$ be an open cover of a non-Hausdorff manifold $\textbf{M}$. If each $U_\alpha$ is Hausdorff, then the cover  $\mathcal{U}$ does not admit a partition of unity subordinate to it.   
\end{theorem}
\begin{proof}
Let $a$ and $b$ be elements of $\textbf{M}$ such that $a \textsf{Y}b$. Observe first that any continuous function $f: \textbf{M} \rightarrow X$ to a Hausdorff space $X$ will necessarily map $f(a) = f(b)$. Indeed, if it were the case that $f(a) \neq f(b)$, then we could apply the Hausdorff property to find two disjoint open sets $U$ and $V$ in $X$ separating $f(a)$ and $f(b)$. The continuity of $f$ would then cause the open sets $f^{-1}(U)$ and $f^{-1}(V)$ to contradict $a \textsf{Y} b$. \\
With this in mind, suppose towards a contradiction that there exists some partition of unity $\{ \psi_\alpha\}$ subordinate to the cover $\{ U_\alpha\}$. Consider the point $a$ in $\textbf{M}$. Since the partition of unity sums to $1$ at $a$, there must be at least one function $\psi_\alpha$ such that $\psi_\alpha(a) >0$. It follows that $$ a \in \{ x \in \textbf{M} \ | \ \psi_\alpha(x) \neq 0 \} \subseteq \overline{\{ x \in \textbf{M} \ | \ \psi_\alpha(x) \neq 0 \}} \subseteq U_\alpha. $$
However, by our observation will we have that $\psi_\alpha(b) = \psi_\alpha(a) > 0$. Therefore the above chain of inclusions applies equally to $b$, from which we may conclude that both $a$ and $b$ lie in the set $U_\alpha$. This contradicts our assumption that $U_\alpha$ is Hausdorff. 
\end{proof} 
 
It follows from the above that the open cover consisting of the canonical submanifolds $M_i$ cannot admit a subordinate partition of unity. It is likely that Theorem \ref{no partitions of unity} will result in some non-trivial obstructions when attempting to recreate the standard theory of differential geometry in the non-Hausdorff regime, though that is beyond the scope of this paper.
 
\subsection{$H$-submanifolds}
In the remainder of this section we will argue that all non-Hausdorff manifolds can be expressed as adjunction spaces. Before getting to the argument, we first need to review the notion of an $H$-submanifold. We recall the following definition, originally found in \cite{hajicek1971causality}.
\begin{definition}\label{Def: H-submanifold}
    Let $\textbf{M}$ be a non-Hausdorff manifold. A subset $V$ of $\textbf{M}$ is called an $H$-submanifold if $V$ is open, Hausdorff and connected, and is maximal with respect to these properties. 
\end{definition}
Since non-Hausdorff manifolds are locally-Euclidean, in particular they are locally-Hausdorff. We may use this observation, together with an appeal to Zorn's Lemma to argue the following.  
\begin{proposition}\label{h-subman forms open cover}
    The collection of $H$-submanifolds of a non-Hausdorff manifold forms an open cover. 
\end{proposition}

It is not immediately clear whether a given Hausdorff submanifold is an $H$-submanifold. Fortunately this has been resolved by Hajicek \cite[Thm. 2]{hajicek1971causality}, who provides a useful criterion for determining whether or not a given subspace is an $H$-submanifold. In our notation, Hajicek's criterion can be stated as follows.
\begin{theorem}[Hajicek's Criterion] \label{hajiceks criterion}
    A subset $V$ of a non-Hausdorff manifold $\textbf{M}$ is an $H$-submanifold if and only if the equality  $\boldsymbol{\partial}V= \textbf{Cl}(\textsf{Y}^V)$ holds.
\end{theorem}
We can use Hajicek's criterion together with Lemma \ref{Mi are H-subman} to conclude that any non-Hausdorff manifold $\textbf{M}$ built according to \ref{DEF: homeomorphic boundaries} admits the canonical subspaces $M_i$ as $H$-submanifolds. As we will see in Section 3, it is not always the case that the $M_i$ are the only $H$-submanifolds of $\textbf{M}$. 

\subsection{A Reconstruction Theorem}
We will now use Proposition \ref{h-subman forms open cover} to argue that all non-Hausdorff manifolds can be described via adjunction spaces. The idea behind this proof can be found in both \cite{OConnellthesis} where it is proved for vector bundles, and \cite{luc2020interpreting}, where it is proved for manifolds in general. For completeness we will provide a proof consistent with our notation.

\begin{theorem}\label{thm: reconstruction theorem}
If $\textbf{M}$ is a non-Hausdorff manifold then $\textbf{M}$ is homeomorphic to an adjunction space.
\end{theorem}
\begin{proof}
Consider the family $M_i$ of all $H$-submanifolds of $\textbf{M}$. We know from Prop. \ref{h-subman forms open cover} that the $M_i$ form an open cover of $\textbf{M}$. Since $\textbf{M}$ is second-countable, in particular $\textbf{M}$ is Lindel\"of. Thus without loss of generality we may assume that the $M_i$ form a countable open cover of $\textbf{M}$. We can then define an adjunction system $\mathcal{F}$ using:  \begin{itemize}
    \item the countable collection of $M_i$, with 
    \item $A_{ij} := M_i \cap M_j$, and
    \item $f_{ij}: A_{ij} \rightarrow M_j$ the identity map.
\end{itemize}
Clearly this collection satisfies the conditions of Definition \ref{adjsystemdef} and the criteria of Theorem \ref{Adj of manifolds gives generalised manifold}. Thus the associated adjunction space $\adj M_i$ is a well-defined locally-Euclidean second-countable space. We can invoke the universal property of adjunction spaces (Lemma \ref{adjuniversalprop}) to conclude that there exists a unique continuous map $g$ such that we have the following commutative diagram for every pair of $M_i$'s: 
\begin{center}
\begin{tikzcd}[row sep=3em]
A_{ij} \arrow[rr, hook] \arrow[dd, "id_{A_{ij}}"']                   &  & M_i \arrow[dd, "\phi_i", hook] \arrow[rddd, "\iota_i", bend left] &   \\
                                                                     &  &                                                                   &   \\
M_j \arrow[rr, "\phi_j"', hook] \arrow[rrrd, "\iota_j"', bend right] &  & \adj M_i \arrow[rd, "g" description, dotted]   &   \\
                                                                     &  &                                                                   & \textbf{M}
\end{tikzcd}
\end{center}
where the $\iota_{\cdot}$ are the inclusion maps, and
$ g: \adj M_i \rightarrow \textbf{M}$ is the map that acts by $[x,i]\mapsto x. $ 
This map is clearly open -- this follows from the fact that $\phi_i$ and $\iota_i$ are open maps for all $i$. Moreover, $f$ is a bijection -- the inverse is given by $f^{-1}(x) = [x,i]$ where $i$ is the index of any $M_i$ that contains $x$. By construction, this map is well-defined. Thus we have obtained a bijective, continuous, open map from $\adj M_i$ to $\textbf{M}$.
\end{proof}

\section{Examples}
As mentioned in the introduction, the prototypical example of a non-Hausdorff manifold is the so-called line with two origins. We will now introduce several more examples, built according to our adjunction space formalism. The following spaces will motivate the discussion in Section 4, where we will study the prospect of characterising the types of $H$-submanifolds that a non-Hausdorff manifold might admit.  

\subsection{The $n$-branched Real Line}
Take $I$ to be an indexing set of size $n$, and consider the adjunction system $\mathcal{F}$ where: 

\begin{itemize}
    \item each $M_i$ equal to a copy of the real line equipped with the standard topology, 
    \item each $A_{ij}$ equal to the set $(-\infty, 0)$ for all $i,j$ distinct, and
    \item each $f_{ij}:A_{ij} \rightarrow M_j$ is the identity map. 
\end{itemize}

Once we require that each $A_{ii}$ equals $M_i$, the data contained in $\mathcal{F}$ forms an adjunction system. The adjunction space subordinate to $\mathcal{F}$ will be a collection of $n$-many copies of the real line all glued to each other along the negative numbers, as pictured in Figure \ref{fig n-branched realline}. According to Theorems \ref{Adj of manifolds gives generalised manifold} and \ref{paracompactness preserved} the $n$-branched real line is a paracompact non-Hausdorff manifold.
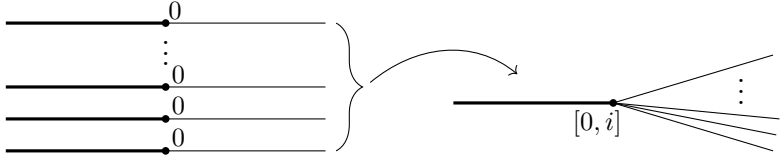
\begin{figure}
\begin{tikzpicture}[scale=0.85]
\draw[very thick] (0,0)--(2.5,0);
\draw[] (2.5,0)--(5,0);
\draw[very thick] (0,0.5)--(2.5,0.5);
\draw[] (2.5,0.5)--(5,0.5);
\draw[very thick] (0,1)--(2.5,1);
\draw[] (2.5,1)--(5,1);
\draw[very thick] (0,2)--(2.5,2);
\draw[] (2.5,2)--(5,2);


\fill(2.5,0) circle[radius=0.06cm] {};
\fill(2.5,0.5) circle[radius=0.06cm] {};
\fill(2.5,1) circle[radius=0.06cm] {};
\fill(2.5,2) circle[radius=0.06cm] {};

\node[] at (2.5, 1.65) {$\vdots$};
\node[] at (2.65,2.2) {\footnotesize{$0$}};
\node[] at (2.7,0.2) {\footnotesize{$0$}};
\node[] at (2.7,0.7) {\footnotesize{$0$}};
\node[] at (2.7,1.2) {\footnotesize{$0$}};

\draw [decorate,decoration={brace,amplitude=10pt,mirror,raise=4pt},yshift=0pt]
(5,0) -- (5,2) node [black,midway,xshift=0.8cm] {};

\draw[->] (5.7,1.07) to[out=40, in=140] (8,1.2);

\draw[very thick] (7,0.75)--(9.5,0.75);
\draw[] (9.5,0.75)--(12,0);
\draw[] (9.5,0.75)--(12.05,0.25);
\draw[] (9.5,0.75)--(12.1,0.5);
\draw[] (9.5,0.75)--(12,1.5);

\fill(9.5,0.75) circle[radius=0.06cm] {};

\node[] at (11.5, 1.05) {$\vdots$};
\node[] at (9.25,0.43) {\footnotesize{$[0,i]$}};

\end{tikzpicture} 
\caption{The construction of the $n$-branched real line}
\label{fig n-branched realline}
\end{figure}

Observe that the origins remain unidentified, thus there are $n$-many distinct equivalence classes $[0,i]$ in the $n$-branched real line. These points will violate the Hausdorff property. Moreover, there are $n$-many $H$-submanifolds equalling the canonically embedded $M_i$. \\

It should be noted that there are other interesting $1$-manifolds that can be built from finitely-many copies of the real line. Indeed, consider three copies of the real line, in which $A_{12} = (-\infty, 0)$, $A_{23}=(0,\infty)$, and $A_{13} = \emptyset$, and all $f$-maps equal the identity. This data defines an adjunction system, and the resulting space will be a non-Hausdorff manifold in which there are three distinct copies of the origin in which the $\textsf{Y}$-relation is not transitive.

\subsection{An Infinitely-Branching Real Line}
We will now construct a non-Hausdorff manifold using countably-many copies of the real line. Suppose that the indexing set $I$ coincides with the natural numbers, ordered linearly. Consider the triple $\mathcal{F}:=(\textbf{X},\textbf{A},\textbf{f})$, where: 
\begin{itemize}
    \item each $M_i$ equals $\mathbb{R}$ with the standard topology,
    \item $A_{ij} = \begin{cases}  (-\infty,i) & \textrm{if} \ i < j \\
    (-\infty,j) & \textrm{if} \ i > j \\
    \mathbb{R} & \textrm{if} \ i = j\end{cases}$, and 
    \item each $f_{ij}$ is the identity map on the appropriate domain.   
\end{itemize}
The reader may verify that this collection $\mathcal{F}$ does indeed form an adjunction system. The resulting adjunction space, which we denote by $\textbf{T}$, will be a countable collection of real lines, successively splitting in two at each natural number, as pictured in Figure \ref{Counterexample N}. 

\begin{figure}
\centering
    \begin{tikzpicture}[scale=0.65]

\draw[] (0,0)--(8,0);
\node[] at (-0.35,0) {$\cdots$};
\node[] at (8.45,0) {$\cdots$};

\draw[] (0,1)--(8,1);
\node[] at (-0.35,1) {$\cdots$};
\node[] at (8.45,1) {$\cdots$};
\fill[draw=black] (3,1) circle[radius=0.07] {};
\fill[draw=black] (3,0) circle[radius=0.07] {};

\draw[] (0,2)--(8,2);
\node[] at (-0.35,2) {$\cdots$};
\node[] at (8.45,2) {$\cdots$};
\fill[draw=black] (4,2) circle[radius=0.07] {};
\fill[draw=black] (4,1) circle[radius=0.07] {};

\draw[] (0,3)--(8,3);
\node[] at (-0.35,3) {$\cdots$};
\node[] at (8.45,3) {$\cdots$};
\fill[draw=black] (5,3) circle[radius=0.07] {};
\fill[draw=black] (5,2) circle[radius=0.07] {};

\draw[] (0,4)--(8,4);
\node[] at (-0.35,4) {$\cdots$};
\node[] at (8.45,4) {$\cdots$};
\fill[draw=black] (6,4) circle[radius=0.07] {};
\fill[draw=black] (6,3) circle[radius=0.07] {};

\draw[dotted] (0.25,0)--(0.25,4.25);
\draw[dotted] (0.5,0)--(0.5,4.25);
\draw[dotted] (0.75,0)--(0.75,4.25);
\draw[dotted] (1,0)--(1,4.25);
\draw[dotted] (1.25,0)--(1.25,4.25);
\draw[dotted] (1.5,0)--(1.5,4.25);
\draw[dotted] (1.75,0)--(1.75,4.25);
\draw[dotted] (2,0)--(2,4.25);
\draw[dotted] (2.25,0)--(2.25,4.25);
\draw[dotted] (2.5,0)--(2.5,4.25);
\draw[dotted] (2.75,0)--(2.75,4.25);
\draw[dotted] (3,1)--(3,4.25);
\draw[dotted] (3.25,1)--(3.25,4.25);
\draw[dotted] (3.5,1)--(3.5,4.25);
\draw[dotted] (3.75,1)--(3.75,4.25);
\draw[dotted] (4,2)--(4,4.25);
\draw[dotted] (4.25,2)--(4.25,4.25);
\draw[dotted] (4.5,2)--(4.5,4.25);
\draw[dotted] (4.75,2)--(4.75,4.25);
\draw[dotted] (5,3)--(5,4.25);
\draw[dotted] (5.25,3)--(5.25,4.25);
\draw[dotted] (5.5,3)--(5.5,4.25);
\draw[dotted] (5.75,3)--(5.75,4.25);
\draw[dotted] (6,4)--(6,4.25);
\node[] at (4, 5) {$\vdots$};

\draw [decorate,decoration={brace,amplitude=10pt,mirror,raise=4pt},yshift=0pt]
(8.75,-0.25) -- (8.75,5) node [black,midway,xshift=0.8cm] {};

\draw[->] (9.75,2.5) to[out=40, in=140] (13.5,2.5);

\draw[] (12,0.5)--(20,0.5);
\fill[draw=black] (14,0.5) circle[radius=0.07] {};
\draw[] (14,0.5)--(20,1.5);
\fill[draw=black] (15.5,0.75) circle[radius=0.07] {};
\draw[] (15.5,0.75)--(20,2.5);
\fill[draw=black] (17,1.325) circle[radius=0.07] {};
\draw[] (17,1.325)--(20,3.5);
\fill[draw=black] (18.5,2.415) circle[radius=0.07] {};
\draw (18.5,2.415)--(20,4.5); 
\node[] at (19, 4.5) {$\vdots$};

\end{tikzpicture} \\
    \caption{The construction of an infinitely-branching real line.}
    \label{Counterexample N}
\end{figure}
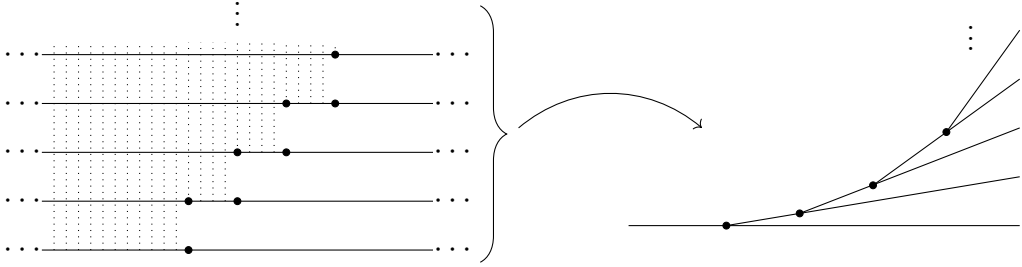

In this example, the copies of $\mathbb{R}$ naturally sit inside $\textbf{T}$ as $H$-submanifolds. In contrast to the previous example, there is an extra $H$-submanifold, given by $$ V = \bigcup_{i,j \in I} \phi_i(A_{ij}).$$ 
It should be noted that a similar space can be found as the rigid $1$-manifold of \cite{gartside2008homogeneous}.
\subsection{A $2$-Branched Euclidean Plane}
We can form branched planes by taking the product of the $n$-branched real line with a copy of $\mathbb{R}$. The resulting spaces will still be non-Hausdorff manifolds, and can be seen as a collection of Euclidean planes that branch out from each other along $x$-axes. \\

Consider the $2$-branched real plane. According to Theorem \ref{thm: reconstruction theorem} there should be a way to construct this space in terms of adjunctions. The obvious choice is to form an adjunction system using: 
\begin{enumerate}
    \item $M_1 = M_2 = \mathbb{R}^2$, 
    \item $A_{12} = \{ (x,y) \in \mathbb{R}^2 \ | \ y<0  \} $ is the open half-plane, and
    \item $f:A_{12} \rightarrow M_2$ is the identity map. 
\end{enumerate}
This collection satisfies the conditions of \ref{Adj of manifolds gives generalised manifold}, and the resulting adjunction space, denoted $\textbf{R}^2$, is a paracompact non-Hausdorff manifold. The construction is depicted in Figure \ref{fig: 2-branched plane}. The Hausdorff-violating points of $\textbf{R}^2$ lie on the two copies of the $x$-axis. For convenience, we will denote the two $x$-axes by $X_1$ and $X_2$.  

\begin{figure}
    \begin{tikzpicture}[scale=0.7]

\fill[color=gray!30] (2,-1)--(7,-2)--(7,-4.5)--(2,-3.5)--(2,-1);

\draw[] (2,1.5)--(7,0.5)--(7,-4.5)--(2,-3.5)--(2,1.5);
    \draw[dashed] (2,-1)--(7,-2);

\draw[dotted] (0,-2.5)--(2,-1);
\draw[dotted] (5,-3.5)--(7,-2);
\draw[dotted] (5,-4.75)--(7,-3.25);
\draw[dotted] (5,-6)--(7,-4.5);
\draw[dotted] (0,-5)--(2,-3.5);
\node[] at (2.75,-2) {$f(A)$};
\fill[color=white, opacity=0.8] (0,0)--(5,-1)--(5,-6)--(0,-5)--(0,0);
\fill[color=gray!30, opacity=0.8] (0,-2.5)--(5,-3.5)--(5,-6)--(0,-5)--(0,-2.5);
    \draw[] (0,0)--(5,-1)--(5,-6)--(0,-5)--(0,0);
        \draw[dashed] (0,-2.5)--(5,-3.5);   

\fill[gray!30] (11,-3)--(16,-4)--(16,-6.5)--(11,-5.5)--(11,-3);

\draw[] (11,-3)--(16,-4)--(16,-6.5)--(11,-5.5)--(11,-3);

\draw[] (11,-3)--(11.5,-0.3)--(16.5,-1.3)--(16,-4);
\fill[color=white, opacity=0.8] (11,-3)--(10,-0.8)--(15,-1.8)--(16,-4);
\draw[] (11,-3)--(10,-0.8)--(15,-1.8)--(16,-4);
\draw[very thick] (11,-3)--(16,-4);
 \fill[color=black] (13.5,-3.5) circle[radius=0.08] {};

\node[] at (-0.5,0.25) {$\mathbb{R}^2$};
\node[] at (0.5,-4) {$A$};
\node[] at (1.5,1.75) {$\mathbb{R}^2$};
\node[] at (16,-0.5) {$\textbf{R}^2$};

 \draw[->] (7.7,-3) to[out=20, in=160] (9.7,-3);

\end{tikzpicture} \\
\caption{An adjunction construction of a $2$-branched plane $\textbf{R}^2$. Here the thick line denotes two Hausdorff-inseparable copies of the $x$-axis.}
    \label{fig: 2-branched plane}
\end{figure}
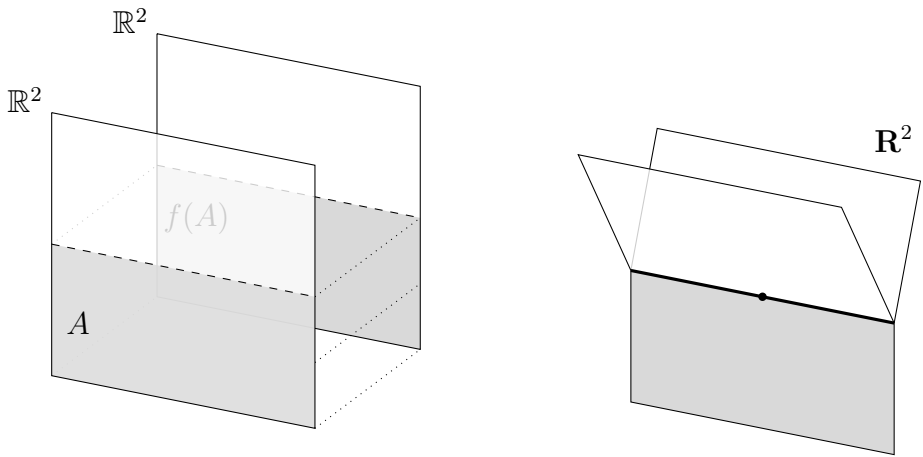

By Theorem \ref{Mi are H-subman} we see that the two copies of $\mathbb{R}^2$ that naturally sit inside $\textbf{R}^2$ will be $H$-submanifolds. However, the space $\textbf{R}^2$ admits many other $H$-submanifolds. We will now briefly describe one. Consider the subspace $V$ of $\textbf{R}$ defined by: $$ V = \textbf{R}^2 \backslash \big( \{ (x,0) \in X_1 \ | \ x \leq 0 \} \cup \{ (x,0) \in X_2 \ | \ x \geq 0 \} \big),      $$
that is, we remove from $\textbf{R}^2$ the non-positive $x$-axis from $X_1$ and the non-negative $x$-axis from $X_2$. Observe that both copies of the origin are removed. The subset $V$ is open in $\textbf{R}^2$ since its complement is closed. Moreover, the $\textsf{Y}$-set of $V$ is: 
$$ \textsf{Y}^V = \{ (x,0) \in X_1 \ | \ x > 0 \} \cup \{ (x,0) \in X_2 \ | \ x < 0 \}.     $$
By a simple analysis, one can see that $$  \boldsymbol{\partial} V = \textsf{Y}^{V} \cup \{ [(0,0), 1], [(0,0), 2] \} = \textbf{Cl}(\textsf{Y}^V),     $$
and thus Hajicek's criterion guarantees that $V$ is an $H$-submanifold of $\textbf{R}^2$. 

\subsection{A Non-Hausdorff Sphere}
We finish this section with an example of a compact non-Hausdorff manifold. Roughly speaking, this space will be a $2$-sphere in which the equatorial copy of $S^1$ is wrapped around itself so that the antipodal points become Hausdorff-inseparable, as pictured in Figure \ref{fig: twisted sphere alone}. We will denote this non-Hausdorff sphere by $\textbf{S}^2$.\\ 

\begin{figure}
    \centering
    \begin{tikzpicture}[tdplot_main_coords, scale = 2.5]

\shade[ball color = lightgray,
    opacity = 0.2
] (0,0,0) circle (1cm);

\tdplotsetrotatedcoords{0}{0}{0};
\draw[dashed,
    tdplot_rotated_coords,
    gray
] (-1,0,0.05) arc (180:360:1);

\tdplotsetrotatedcoords{0}{0}{0};
\draw[dashed,
    tdplot_rotated_coords,
    gray
] (-1,0,-0.05) arc (180:360:1);

\tdplotsetrotatedcoords{0}{0}{0};
\draw[dashed,
    tdplot_rotated_coords,
    gray
] (1,0,0.05) arc (0:160:1);

\tdplotsetrotatedcoords{0}{0}{0};
\draw[dashed,
    tdplot_rotated_coords,
    gray
] (1,0,-0.05) arc (0:160:1);

\draw[dashed, gray] (-0.95,0.3,0.05)--(-1,0,-0.05);
\draw[dashed, gray] (-1,0,0.05)--(-0.95,0.3,-0.05);

\end{tikzpicture}
    \caption{The non-Hausdorff sphere $\textbf{S}^2$, taken from \cite{hommelberg2014compact}.}
    \label{fig: twisted sphere alone}
\end{figure}

The original construction of $\textbf{S}^2$ uses a modified torus to obtain the doubled equator \cite{hommelberg2014compact}. A more succinct construct identifies antipodes of the $2$-sphere everywhere outside the equatorial copy of $S^1$ \cite{deeley2022fell}. Both of these constructions are not adjunction spaces, and it has even been suggested that such a colimit construction does not exist \cite{ruijter2017weak}. We will now remedy this by providing a construction of $\textbf{S}^2$ in terms of an adjunction of punctured $2$-spheres. We will start with the standard 2-sphere, embedded into $\mathbb{R}^3$ for convenience. Consider the equatorial copy of $S^1$ parameterised as $$  S^1 = \{ (x,y,0) \ | \ x^2 + y^2 = 0 \}.$$
Let $a = (1,0,0)$ and $b = (-1,0,0)$ be antipodes on $S^1$. Consider the following punctured spheres: 
\begin{itemize}
    \item $M_1$ and $M_2$ are  $S^2 \backslash \{ a \}$, and
    \item $M_3$ and $M_4$ are  $S^2 \backslash \{ b \}$.
\end{itemize}

We will now glue the $M_i$ in such a manner that halves of each $S^1$ eventually form a double-covered equator. Within each $M_i$ there are two arcs connecting the antipodal points $a$ and $b$. We denote these by $L^i_+$ and $L^i_-$, that is, $$  L^i_+ := \{ (x,y,0) \in M_i \ | \ y\geq 0 \} \ \textrm{and} \ L^i_- := \{ (x,y,0) \in M_i \ | \ y\leq 0 \}.  $$
Observe that in each $M_i$ the segments $L^i_{\pm}$ are closed subsets. The following table contains all of the pairwise gluing regions $A_{ij}$. 

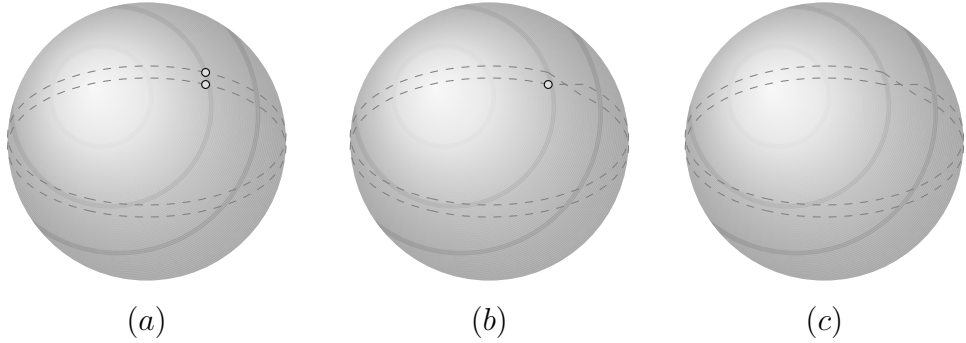
\begin{figure}
    \begin{center}
\begin{tikzpicture}[tdplot_main_coords, scale = 1.85]

\shade[ball color = lightgray,
    opacity = 0.2
] (0,0,0) circle (1cm);

\tdplotsetrotatedcoords{0}{0}{0};
\draw[dashed,
    tdplot_rotated_coords,
    gray
] (-1,0,0.05) arc (180:360:1);

\tdplotsetrotatedcoords{0}{0}{0};
\draw[dashed,
    tdplot_rotated_coords,
    gray
] (-1,0,-0.05) arc (180:360:1);

\tdplotsetrotatedcoords{0}{0}{0};
\draw[dashed,
    tdplot_rotated_coords,
    gray
] (1,0,0.05) arc (0:180:1);

\tdplotsetrotatedcoords{0}{0}{0};
\draw[dashed,
    tdplot_rotated_coords,
    gray
] (1,0,-0.05) arc (0:180:1);

\draw[fill = lightgray!50] (-1,0,0.05) circle (0.8pt);
\draw[fill = lightgray!50] (-1,0,-0.05) circle (0.8pt);

\node[] at (0,0,-1.5) {$(a)$};
 
\end{tikzpicture}
\hspace{0.05cm}
\begin{tikzpicture}[tdplot_main_coords, scale = 1.85]

\shade[ball color = lightgray,
    opacity = 0.2
] (0,0,0) circle (1cm);

\tdplotsetrotatedcoords{0}{0}{0};
\draw[dashed,
    tdplot_rotated_coords,
    gray
] (-1,0,0.05) arc (180:360:1);

\tdplotsetrotatedcoords{0}{0}{0};
\draw[dashed,
    tdplot_rotated_coords,
    gray
] (-1,0,-0.05) arc (180:360:1);

\tdplotsetrotatedcoords{0}{0}{0};
\draw[dashed,
    tdplot_rotated_coords,
    gray
] (1,0,0.05) arc (0:160:1);

\tdplotsetrotatedcoords{0}{0}{0};
\draw[dashed,
    tdplot_rotated_coords,
    gray
] (1,0,-0.05) arc (0:160:1);

\draw[dashed, gray] (-0.95,0.3,0.05)--(-1,0,-0.05);
\draw[dashed, gray] (-1,0,0.05)--(-0.95,0.3,-0.05);

\draw[fill = lightgray!50] (-1,0,-0.05) circle (0.8pt);
 
\node[] at (0,0,-1.5) {$(b)$}; 
 
\end{tikzpicture}
\hspace{0.05cm}
\begin{tikzpicture}[tdplot_main_coords, scale = 1.85]

\shade[ball color = lightgray,
    opacity = 0.2
] (0,0,0) circle (1cm);

\tdplotsetrotatedcoords{0}{0}{0};
\draw[dashed,
    tdplot_rotated_coords,
    gray
] (-1,0,0.05) arc (180:360:1);

\tdplotsetrotatedcoords{0}{0}{0};
\draw[dashed,
    tdplot_rotated_coords,
    gray
] (-1,0,-0.05) arc (180:360:1);

\tdplotsetrotatedcoords{0}{0}{0};
\draw[dashed,
    tdplot_rotated_coords,
    gray
] (1,0,0.05) arc (0:160:1);

\tdplotsetrotatedcoords{0}{0}{0};
\draw[dashed,
    tdplot_rotated_coords,
    gray
] (1,0,-0.05) arc (0:160:1);

\draw[dashed, gray] (-0.95,0.3,0.05)--(-1,0,-0.05);
\draw[dashed, gray] (-1,0,0.05)--(-0.95,0.3,-0.05);
 
\node[] at (0,0,-1.5) {$(c)$};
 
\end{tikzpicture}
\end{center}
    \caption{A successive construction of $\textbf{S}^2$. Here $(a)$ is the adjunction of the spaces $M_1$ and $M_2$. Figure (b) represents the gluing of $M_1, M_2$ and $M_3$. Figure (c) is the final space.}
    \label{FIG: Constructing the twisted sphere}
\end{figure}
\begin{center}
\begin{tabular}{ c|c|c|c|c} 
 \ & $M_1$ & $M_2$ & $M_3$ & $M_4$ \\
 \hline
 $M_1$ & $M_1$ & $M_1 \backslash S^1$ & $M^1 \backslash L^1_-$ & $M^1 \backslash L^1_+$ \\
 $M_2$ & $M^2 \backslash S^1$ & $M_2$ & $M^2 \backslash L^2_+$ & $M^2 \backslash L^2_-$ \\
 $M_3$ & $M^3 \backslash L^3_-$ & $M^3 \backslash L^3_+$ & $M_3$ & $M^3 \backslash S^1$ \\
 $M_4$ & $M^4 \backslash L^4_+$ & $M^4 \backslash L^4_-$ & $M^4 \backslash S^1$ & $M_4$ \\
\end{tabular}
\end{center}
The above data indeed defines an adjunction system. It is perhaps easiest to visualise the gluing as a successive process. Figure \ref{FIG: Constructing the twisted sphere} depicts the stages in the construction.

\section{The Characterisation of $H$-Submanifolds}

In Section 3.3 we saw the $2$-branched Euclidean plane, a non-Hausdorff manifold that admits infinitely-many $H$-submanifolds. Of these infinite $H$-submanifolds there are two that stand out -- the images of the two copies of the plane. This raises an interesting question: given an adjunction space that forms a non-Hausdorff manifold, what topological properties characterise the images $\phi_i(M_i)$? In this section we will take steps towards this characterisation. 

\subsection{Simple Non-Hausdorff Manifolds}

We will restrict our attention to a basic type of non-Hausdorff manifold. Following the terminology of \cite{muller2013generalized}, we will refer to these as \textit{simple} non-Hausdorff manifolds. The definition is as follows. 

\begin{definition}\label{DEF: Simple NH Manifolds}
A non-Hausdorff manifold $\textbf{M}$ is called \textit{simple} if it is homeomorphic to an adjunction space $\adj M_i$ in which:
\begin{itemize}
    \item each $M_i$ is the same Hausdorff manifold $M$,
    \item the indexing set $I$ is finite,
    \item each $A_{ij}$ is the same connected open subspace $A$ that has connected boundary, and 
    \item each gluing map $f_{ij}$ is the identity. 
\end{itemize}
Similarly, an adjunction system $\mathcal{F}$ is called simple if its adjunction space yields a simple non-Hausdorff manifold.
\end{definition}

The above definition captures the idea of a non-Hausdorff manifold branching out finitely-many times from a single submanifold. Note that the $n$-branched real line and the $2$-branched plane of Section 3 are both simple, whereas the infinitely-branching real line is not. \\

The main benefit of considering simple non-Hausdorff manifolds is that they admit a particularly easy description of Hausdorff violation. Observe that the component spaces of a simple non-Hausdorff manifold will have homeomorphic boundaries, since we may use the identity map to define each extension $\tilde{f}_{ij}$, and thus the boundary-maps $g_{ij}$ equal $\phi_j \circ \phi_i^{-1}$ (cf. \ref{DEF: homeomorphic boundaries}). We also make the following useful observation, which follows immediately from Definition \ref{DEF: Simple NH Manifolds}.
\begin{lemma}\label{simple then trivial bdry in M}
If $\textbf{M}$ is simple, then $\boldsymbol{\partial}^iA_{ij} = \boldsymbol{\partial}^i A =  \boldsymbol{\partial}^i A_{ik}$ for all $i,j,k$ in $I$. 
\end{lemma}
It follows from the above that the Hausdorff-violating sets in a simple non-Hausdorff manifold will consist of the $I$-many (disjoint) connected components of the boundary of $A$. Thus we may conclude that the non-Hausdorff sphere of Section 3.4 is not simple, since the set $\textsf{Y}^{\textbf{S}^2}$ is path-connected.

\subsection{A Generalisation of M\"uller's Theorem}
We will now argue that simple non-Hausdorff manifolds admit a characterisation of their canonical subspaces $M_i$. There is an argument due to M\"uller \cite{muller2013generalized} that certain simple branched Minkowski spaces admit a characterisation in terms of existent limit points. We will extend this result by passing into a broader generality, and by using a weaker condition.\\

Recall that Hajicek's criterion demands that each $H$-submanifold satisfies $\boldsymbol{\partial} V = \textbf{Cl}(\textsf{Y}^V)$. We will argue that in a simple non-Hausdorff manifold, the canonical subspaces $M_i$ are the unique $H$-submanifolds satisfying the stricter equality $\boldsymbol{\partial} V = \textsf{Y}^V$. Our argument will be via induction on the size of the indexing set underlying $\mathcal{F}$. In order to do so, we will make use of the following key lemma. 
\begin{lemma}\label{simple then y-set disjoint or full}
Suppose that $\textbf{M}$ is simple and $V$ is an $H$-submanifold of $\textbf{M}$ satisfying $\boldsymbol{\partial} V= \textsf{Y}^V$. Then for any boundary component $\boldsymbol{\partial}^i A$, either $\textsf{Y}^V $ and $\boldsymbol{\partial}^i A$ are disjoint, or $\boldsymbol{\partial}^i A$ is contained within $\textsf{Y}^V$.
\end{lemma}
\begin{proof}
By \ref{DEF: Simple NH Manifolds} $\boldsymbol{\partial}^i A$ is connected, so it suffices to show that the intersection $\textsf{Y}^V \cap \boldsymbol{\partial}^i A$ is clopen in $\boldsymbol{\partial}^i A$. Observe first that by assumption $\textsf{Y}^V$ coincides with the boundary $\boldsymbol{\partial} V$, so it is closed in $\textbf{M}$. Moreover, observe that Lemma \ref{simple then trivial bdry in M} yields the equality $$  \textsf{Y}^V \cap \boldsymbol{\partial}^i A = \bigcup_{k\neq i} \phi_i \circ \phi_k^{-1}\left(V \cap \boldsymbol{\partial}^k A\right).$$ Since the sets $V \cap \boldsymbol{\partial}^k A$ are all open and they map homeomorphically into $\boldsymbol{\partial}^i A = \boldsymbol{\partial}^i A$, we may conclude that $\textsf{Y}^V \cap \boldsymbol{\partial}^i A$ is a union of open sets.  
\end{proof}

We may use the above lemma together with Theorem \ref{Mi are H-subman} to obtain the following result. 
\begin{theorem}\label{main thm binary case}
Let $\textbf{M}$ be a simple non-Hausdorff manifold that is homeomorphic to a binary adjunction space $M_1 \cup_f M_2$. If $V$ is an $H$-submanifold of $\textbf{M}$ that satisfies the equality $\boldsymbol{\partial}V = \textsf{Y}^V$, then either $V=M_1$ or $V=M_2$.
\end{theorem}
\begin{proof}
By definition the gluing regions in a simple non-Hausdorff manifolds have homeomorphic boundaries, so we may apply Lemma \ref{Mi are H-subman} to conclude that both $M_i$ satisfy $\boldsymbol{\partial} M_i = \textsf{Y}^{M_i}$. Suppose towards a contradiction that there is some $H$-submanifold $V$ of $\textbf{M}$ that satisfies $\boldsymbol{\partial} V = \textsf{Y}^V$, but is distinct from both $M_1$ and $M_2$. Then there are points $[x,1]$ in $V \backslash M_2$ and $[y,2]$ in $V\backslash M_1$. Let $\gamma$ be a path connecting $[x,1]$ to $[y,2]$ in $V$. Consider some element $z$ in the $\gamma$-relative boundary $\partial^\gamma(M_1)$ that is distinct from $[x,1]$. Then $z$ lies in $\boldsymbol{\partial} M_1$ as well. Since $\boldsymbol{\partial} M_1 = \textsf{Y}^{M_1}$, there exists some $w$ in $M_2$ such that $z \textsf{Y} w$. Since $z$ lies on the curve $\gamma$ (which is a curve in $V$), it follows that the element $w$ is a member of $\textsf{Y}^V$. Therefore $\textsf{Y}^V \cap \boldsymbol{\partial}^2 A \neq \emptyset$, so $\boldsymbol{\partial}^2 A \subseteq \textsf{Y}^V$ by Lemma \ref{simple then trivial bdry in M}. We may now repeat the same argument, this time starting with the subset $M_2 \cap \gamma$. We will obtain an element $w'$ that lies in the set $\textsf{Y}^V \cap \boldsymbol{\partial}^1 A$. This implies that both $\boldsymbol{\partial}^1 A$ and $\boldsymbol{\partial}^2 A$ are subsets of $\textsf{Y}^V$, which contradicts $V$ as Hausdorff.  
\end{proof}

In order to continue an inductive argument, we need to appeal to the adjunctive subspaces of Section 1.2. Observe first that any subsystem $\mathcal{G}$ of a simple adjunction system $\mathcal{F}$ will again be simple. We will also have the following collection of facts, which follow trivially from \ref{thm adjunctive subspaces}.

\begin{lemma}\label{V restricts to Hsub of subadj space}
Let $\mathcal{F}$ be a simple adjunction system, and $\mathcal{G} \subset \mathcal{F}$. Denote by $g: \bigcup_{\mathcal{G}} M_j \rightarrow \adj M_i$ the map sending each $\llbracket x,i\rrbracket$ to $[x,i]$. Let $V$ be an $H$-submanifold of $\adj M_i$. Then \begin{enumerate}
    \item if $V \subseteq g\left( \bigcup_{\mathcal{G}} M_j\right)$, then $\llbracket V \rrbracket := g^{-1}(V)$ is an $H$-submanifold of $\bigcup_{\mathcal{G}} M_j$, and 
    \item if additionally $V$ satisfies $\boldsymbol{\partial} V = \textsf{Y}^V$, then so does $\llbracket V \rrbracket$.
\end{enumerate}
\end{lemma}

We will now use the above results to prove the main theorem of this paper. 

\begin{theorem}\label{main thm}
Let $\textbf{M}\cong \adj M_i$ be a simple non-Hausdorff manifold. If $V$ is an $H$-submanifold of $\textbf{M}$ that satisfies the equality $\boldsymbol{\partial}V = \textsf{Y}^V$, then $V=M_i$ for some $i$ in $I$.
\end{theorem}
\begin{proof}
We proceed via induction on the size of the adjunction space $\mathcal{F}$, that is, by induction on the size of the indexing set $I$. If $n=2$, then the result follows from the argument detailed in Theorem \ref{main thm binary case}. Suppose that the hypothesis holds for adjunction spaces of size $n$, and let $\textbf{M}$ be an adjunction space of size $n+1$. Suppose towards a contradiction that there exists some $H$-submanifold $V$ that is distinct from $M_1, ..., M_{n+1}$ yet satisfies the condition $\boldsymbol{\partial} V = \textsf{Y}^V$. \\

Observe first that $V$ cannot be contained within some finite union of $n$-many $M_i$'s. Indeed -- if it were then we could apply Lemma \ref{V restricts to Hsub of subadj space} and restrict $V$ to $\llbracket V \rrbracket$. We may then apply the induction hypothesis to conclude that $\llbracket V \rrbracket$ equalled some $M_i$ in the adjunctive subspace, and thus $V$ would equal that same $M_i$ in $\textbf{M}$. It must therefore be the case that $V$ is not a subset of some union of $n$-many $M_i$'s. Moreover, by maximality it cannot be the case that $V \subseteq M_{n+1}$. Therefore, we may infer that: 
\begin{enumerate}
    \item there exists some element $x$ in $V$ that lies in the difference $M_{n+1} \backslash \bigcup_{i=1}^n M_i$, and 
    \item there exists some element $y$ in $V$ that lies in the difference $\textbf{M}  \backslash M_{n+1}$.\footnote{Throughout this proof we will suppress the equivalence classes of points in $\textbf{M}$ for readability.}
\end{enumerate}
We may now proceed in a manner similar to that of Theorem \ref{main thm binary case}, with some key modifications for this more-complicated scenario. Let $\gamma$ be a path in $V$ that connects $x$ to $y$ in $V$. We will now use $\gamma$ to yield a contradiction. 
\begin{enumerate}
    \item Consider the set $\gamma \cap M_{n+1}$. The path $\gamma$ might enter and exit $M_{n+1}$ several times. So, consider the connected component $C$ of $\gamma \cap M_{n+1}$ that contains the endpoint $x$. The existence of the endpoints $x$ and $y$ ensure that the set $C$ is an open subset of $\gamma$ that is both non-empty and proper. As such, $C$ has a $\gamma$-relative boundary. Let $z$ be the element of $\partial^\gamma C$ distinct from $x$. Then $z$ is also an element of $\boldsymbol{\partial} M_{n+1}$. By Lemma \ref{basic facts about homeomorphic bdrys}, there must exist some $z'$ in $M_{n+1}$ such that $z \textsf{Y} z'$. Since $z$ lies in $\gamma$, which lies in $V$, we may conclude that $z' \in \textsf{Y}^V$. 
    Thus $z'$ lies in both $\boldsymbol{\partial}^{n+1} A$ and $\textsf{Y}^V$. According to Lemma \ref{simple then y-set disjoint or full}, we may conclude that $\boldsymbol{\partial}^{n+1}A$ is a subset of $\textsf{Y}^V$. 
    \item Consider the set $\gamma \backslash A$. This is a non-empty, closed proper subset of $\gamma$. Let $D$ be the connected component of $\gamma \backslash A$ that contains $x$. The set $D$ has two elements in its $\gamma$-relative boundary -- the first is $x$, and the second will be some other element $w$ in $\gamma$. Then $w \in \partial^\gamma A$, from which it follows that $w$ also lies in the set $\boldsymbol{\partial}^{n+1} A$. Since $w$ is an element of $\gamma$, which is a path in $V$, it follows that $V \cap \boldsymbol{\partial}^{n+1}A$ contains $w$.  
\end{enumerate}
We have thus arrived at our contradiction -- according to items (1) and (2) above the intersection $V \cap \textsf{Y}^V$ is non-empty, which contradicts our assumption that $V$ is Hausdorff. We may therefore conclude that there is no $V$ in $\textbf{M}$ that satisfies $\boldsymbol{\partial} V = \textsf{Y}^V$ and is distinct from each $M_1,...,M_{n+1}$. 
\end{proof}

\section{Conclusion}
In this paper we have introduced a general theory for simultaneously gluing arbitrarily-many topological spaces together along open subsets. In Section 2 we saw that if we consider a countable collection of Hausdorff manifolds that are glued along homeomorphic open subspaces, then we will always obtain a locally-Euclidean, second-countable space. Theorem \ref{thm: reconstruction theorem} confirmed that all non-Hausdorff manifolds can be described in this manner. According to Theorems \ref{preservation of top prop} and \ref{paracompactness preserved} we saw that such spaces are $T_1$, and may also be paracompact, provided that certain criteria is met. However, due to \ref{no partitions of unity} they may have open covers which do not admit partitions of unity subordinate to them. Upon requiring that the gluing regions have pairwise homeomorphic boundaries, we saw in Theorems \ref{basic facts about homeomorphic bdrys} and \ref{Mi are H-subman} that we can describe the Hausdorff-violating points of a non-Hausdorff manifold via these boundary components. This description can be summarised by the equality: $\boldsymbol{\partial}M_i = \textsf{Y}^{M_i}$. \\

Finally, we extended and improved a result of M\"uller \cite{muller2013generalized} to include general simple non-Hausdorff manifolds. Theorem \ref{main thm} shows that any simple non-Hausdorff manifold $\textbf{M}$ admits $n$-many $H$-submanifolds that satisfy the reduced Hajicek criterion $\boldsymbol{\partial} V = \textsf{Y}^V$. These special $H$-submanifolds are given uniquely by the $M_i$, which were the spaces used to construct $\textbf{M}$ in the first place. \\

It is not clear how to further generalise the result of \ref{main thm}. The requirement of simplicity was used in two essential ways: first, we restricted our attention to finite-sized adjunction systems in order to be able to perform an inductive argument, and we demanded the heavy restriction of \ref{simple then trivial bdry in M} in order to use Lemma \ref{simple then y-set disjoint or full}, which was an integral part of our eventual argument. It may be the case that the reduced Hajicek criterion is not an appropriate condition to uniquely characterise canonical subspaces of a non-simple non-Hausdorff manifold. Indeed -- the infinitely-branched real line of Section 3.2 has an ``extra" $H$-submanifold that still satisfies the property $\boldsymbol{\partial}V = \textsf{Y}^V$. A more nuanced condition may be required for the general argument. 

\subsection*{Acknowledgements}
This paper was made possible by the funding received by the Okinawa Institute of Science and Technology. I'd like to thank Yasha Neiman, Slava Lysov and Tim Henke for the productive discussions, and KP Hart and Andrew Lobb for the advice. 
\printbibliography

\end{document}